\documentclass{kms-b}


\issueinfo{}
  {}
  {}
  {}
\pagespan{1}{}
\copyrightinfo{}
  {Korean Mathematical Society}

\allowdisplaybreaks

\theoremstyle{plain}
\newtheorem{theorem}{Theorem}[section]

\newtheorem{corollary}[theorem]{Corollary}
\theoremstyle{definition}
\theoremstyle{remark}
\newtheorem{remark}[theorem]{Remark}

\usepackage{mathrsfs}
\usepackage{amsthm}
\usepackage{graphicx, amscd, amsmath, amsfonts, amssymb}

\begin{document}

\title[Mathieu-type series of $(p,q)$--extended Gaussian ${}_2F_1$]
{Mathieu-type series built by $(p, q)$--extended Gaussian hypergeometric function}

\author[J. Choi]{Junesang Choi}
\address{Junesang Choi \\ Department of Mathematics \\ Dongguk University \\ Gyeongju 38066, Republic of Korea}
\email{junesang@dongguk.ac.kr}

\author[R. K. Parmar]{Rakesh K. Parmar}
\address{Rakesh K. Parmar \\ Department of Mathematics \\ Government College of Engineering and Technology \\ Bikaner-334004, Rajasthan State, India} \email{rakeshparmar27@gmail.com}

\author[T. K. Pog\'any]{Tibor K. Pog\'any}
\address{Tibor K. Pog\'any \\ Faculty of Maritime Studies \\ University of Rijeka \\ 51000 Rijeka, Croatia \\ $and$ \\
Applied Mathematics Institute \\ \'Obuda University \\ 1034 Budapest, Hungary} 
\email{poganj@pfri.hr}

\subjclass{ Primary 33B20, 33C20; Secondary 33B15, 33C05.}
\keywords{$(p, q)$-extended Beta function; $(p, q)$-extended Gaussian hypergeometric function; integral representations;
Mathieu--type series; Cahen formula; bounding inequality} 

\begin{abstract}
The main purpose of this paper is to present closed integral form expressions for the Mathieu-type $\boldsymbol a$-series and its 
associated alternating version whose terms contain a $(p, q)$--extended Gauss' hypergeometric function. 
Certain upper bounds for the two series are also given.
\end{abstract}

\maketitle

\allowdisplaybreaks

\section{\bf Introduction and preliminaries}
In the recent articles Pog\'any, either alone and/or with his co--workers Baricz, Butzer, Saxena, Srivastava and Tomovski
\cite{baricz0}, \cite{Pog-1, Pog-2, Pog-3,  Pog-Sax, Pog-4, Pog-5, Pog-6} considered special general Mathieu--type series and
their alternating variants whose terms contain the various special functions, for example, Gauss
hypergeometric function ${}_2F_1$, generalized hypergeometric ${}_pF_q$, Meijer $G$-functions and so on. The derived results
concern, among others, closed integral form expressions for the considered series and bilateral bounding inequalities. Here
we are interested in giving integral expressions for the Mathieu--type series and its alternating variants  built by terms which contain the $(p, q)$--extended Gauss' hypergeometric function which generalizes the so--called
$p$--extension of the $p$--Gaussian hypergeometric function \cite{Ch-Qa-Sr-Pa, Parmar-Pogany}.
These functions are built by first changing some terms in the defining series into Beta-function and then
replacing the Beta-function with its  $p$--variant and $(p, q)$--variant.
The above mentioned extensions, generalizations and unifications of Euler's Beta function together with a set of related higher transcendental
hypergeometric type special functions have been investigated recently by several authors, for instance,
 one may refer to \cite{Ch-Qa-Ra-Zu, Ch-Qa-Sr-Pa, Ch-Zu-02-Book}.
 In particular, Chaudhry {\it et al.} \cite[p. 20, Eq. (1.7)]{Ch-Qa-Ra-Zu} introduced the $p$--extension of the
Eulerian Beta function ${\rm B}(x, y)$:
   \[ {\rm B}(x, y; p) = \int_0^1\, t^{x-1}\,(1-t)^{y-1}\, {\rm e}^{-\frac{p}{t(1 - t)}}  {\rm d}t \qquad (\Re (p)>0), \]
whose special case when $p=0$ reduces to the familiar Beta function ${\rm B}(x, y)$ $(\min\{ \Re (x),\,\Re (y)\}>0)$
(see, \emph{e.g.}, \cite[Section 1.1]{Sr-Ch-12}).
They extended Macdonald (or modified Bessel function of the second kind), error and Whittaker functions by using
the ${\rm B}(x, y; p)$.
Also Chaudhry {\it et al.} \cite{Ch-Qa-Sr-Pa} used
the ${\rm B}(x,y; p)$ to extend  Gaussian hypergeometric and  confluent (Kummer's) hypergeometric functions in the following manner:
   \begin{equation} \label{A2}
      F_p(a, b;c; z)  =  \sum_{n\geq 0}^\infty\,(a)_n\, \frac{{\rm B}(b+n,\,c-b\,;\,p)}{{\rm B}(b,\,c-b)}\,\frac{z^n}{n!},                            
	 \end{equation}
where $p \geq 0;\, \Re(c)>\Re(b)>0\,;|z|<1$ and
   \begin{equation} \label{A3}
      \Phi_p(b;c; z) = \sum_{n \geq 0} \frac{{\rm B}(b+n,\,c-b\,;\,p)}{{\rm B}(b,\,c-b)}\,\frac{z^n}{n!},			                          
   \end{equation}
where $p \geq 0 \,;\, \Re(c)>\Re(b)>0$, respectively. It is noted that the special case of \eqref{A2} and \eqref{A3} when $p=0$
yield,  respectively, the Gaussian hypergeometric function ${}_2F_1(a,b;c;z)$ and the confluent (Kummer's) hypergeometric function
${}_1F_1(b;c;z)$ (see, \emph{e.g.}, \cite[Section 1.5]{Sr-Ch-12}).

Recently, Choi {\it et al.} \cite{Ch-Ra-Pa} have introduced {\it further} extensions of ${\rm B}(x, y; p)$, $p$--extended Gauss' hypergeometric
series $F_p(a, b;c; z)$ and {\it a fortiori} the $p$--Kummer (or confluent hypergeometric) $\Phi_p(b;c; z)$ as follows:
   \[ {\rm B}(x,y;p,q) =  \int_0^1 t^{x-1}(1-t)^{y-1} \, {\rm e}^{-\frac{p}{t} - \frac{q}{1 - t}}  {\rm d}t
      \qquad (\min\{\Re(p),\, \Re(q)\} \geq 0). \]
Here  $\min\{\Re(x), \Re(y)\}>0$ if $p=0=q$. Next, for all $\Re(c)>\Re(b)>0$  
   \[ F_{p,q}(a, b;c; z)  =  \sum_{n \geq 0}\,(a)_n\, \frac{{\rm B}(b+n,\,c-b\,;\,p,q)}{{\rm B}(b,\,c-b)}\,\frac{z^n}{n!}	\qquad (|z|<1), \] 
and 
   \begin{equation}\label{A6}
      \Phi_{p,q}(b;c; z)  =  \sum_{n \geq 0}\, \frac{{\rm B}(b+n,\,c-b\,;\,p,q)}{{\rm B}(b,\,c-b)}\,\frac{z^n}{n!}
			                            \qquad (\Re(c)>\Re(b)>0).
   \end{equation}
The $F_{p,q}(a, b;c; z)$ and the $\Phi_{p,q}(b;c; z)$ are called  {\it $(p, q)$--extended Gauss'} and {\it $(p, q)$--extended Kummer 
hypergeometric functions}, respectively. For their related properties, integral representations, differentiation formulas, Mellin 
transform, recurrence relations and certain summations, the interested reader may refer to \cite{Ch-Ra-Pa}. For a $(p, q)$--extended 
Srivastava's triple generalized $H_{p, q, A}$ function, one may see \cite{Parmar-Pogany}.

Now,  by imposing the $F_{p,q}(a, b;c; z)$ input--kernel instead of the originally used ${}_2F_1$ in the summands of the Mathieu--type series 
in \cite{Pog-1}, we extend to define the Mathieu--type $\mathbf a$--series $\mathfrak{F}_{\lambda,\eta}$ and its alternating variant 
$\mathfrak{\widetilde{F}}_{\lambda,\eta}$ in the form of series 
   \begin{equation} \label{A7}
      \mathfrak{F}_{\lambda,\eta}(F_{p,q};\boldsymbol{a};r) := \sum_{n \geq 1} \frac{F_{p,q}\,(\lambda,\, b;\,c;\, -\frac{r^{2}}{a_{n}})}
			             {a_{n}^{\lambda}(a_{n}+r^{2})^{\eta}}\qquad \left(p \geq 0,\;q \geq 0;\,\lambda,\eta,r \in \mathbb{R}^{+}\right)
   \end{equation}
and in the same range of parameters 
   \begin{equation} \label{A8}
      \mathfrak{\widetilde{F}}_{\lambda,\eta}(F_{p,q};\boldsymbol{a};r) := \sum_{n \geq 1} \frac{(-1)^{n-1}
			                 F_{p,q}\,(\lambda,\, b;\,c;\, -\frac{r^{2}}{a_{n}})}{a_{n}^{\lambda}(a_{n}+r^{2})^{\eta}}.
   \end{equation}
Here and in what follows, let $\mathbb{R}$ and $\mathbb{R}^{+}$ be the sets of real and  positive real numbers, respectively.

The main purpose of this note is to present integral representations and allied bounding inequalities for these functions in
the widest range of the parameters involved.

\section{Integral representations of $\mathfrak{F}_{\lambda,\eta}(F_{p,q};\boldsymbol{a};r)$ and
$\mathfrak{\widetilde{F}}_{\lambda,\eta}(F_{p,q};\boldsymbol{a};r)$}

In this section, we first give closed integral form expressions for the series
$\mathfrak{F}_{\lambda,\eta}(F_{p,q};\boldsymbol{a};r)$ and $\mathfrak{\widetilde{F}}_{\lambda,\eta}(F_{p,q};\boldsymbol{a};r)$.
Then we give some special cases of our first main result.

\begin{theorem}\label{thm-1}
 Let $\lambda$,\;$\eta$,\;$r \in \mathbb{R}^+$ and let $\boldsymbol{a} = (a_n)_{n \geq 1}$ be  a real sequence
 which increases monotonically and tends to $\infty$. Then for $\min\{\Re(p),\Re(q)\} \geq 0$ we have
    \begin{equation}\label{C2}
       \mathfrak{F}_{\lambda,\eta}(F_{p,q};\boldsymbol{a};r)      = \lambda \;\mathscr I_{p, q}(\lambda+1,\eta)
		                  + \eta\; \mathscr I_{p, q}(\lambda,\eta+1)
    \end{equation}
  and
  \begin{equation}\label{C21}
   \mathfrak{\widetilde{F}}_{\lambda,\eta}(F_{p,q};\boldsymbol{a};r)
     = \lambda \;\mathscr{\widetilde{I}}_{p, q}(\lambda+1,\eta)
	                  + \eta\; \mathscr{\widetilde{I}}_{p, q}(\lambda,\eta+1),
  \end{equation}
  where
  \begin{equation}\label{C3}
  \mathscr I_{p, q}(\lambda, \eta)   = \int_{a_{1}}^{\infty} \frac{F_{p,q}\,(\lambda,\, b;\,c;\,
	          - \frac{r^{2}}{x})[a^{-1}(x)]}{x^{\lambda}(x+r^{2})^{\eta}}\;{\rm d}x
  \end{equation}
  and
  \begin{equation}\label{C31}
  \mathscr{\widetilde{I}}_{p, q}(\lambda,\eta) = \int_{a_{1}}^{\infty} \frac{F_{p,q}\,(\lambda,\, b;\,c;\,
			                                   - \frac{r^{2}}{x})\;\sin^{2}\left(\frac{\pi}{2}[a^{-1}(x)]\right)}
	 {x^{\lambda}(x+r^{2})^{\eta}}\;{\rm d}x
  \end{equation}
  and $a:\mathbb{R}^{+}\mapsto\mathbb{R}^{+}$ is an increasing function such that $a(x)|_{x \in \mathbb{N}}=\boldsymbol{a}$,
$a^{-1}(x)$ denotes the inverse of $a(x)$ and $[a^{-1}(x)]$ stands for the integer part of the quantity $a^{-1}(x)$.
\end{theorem}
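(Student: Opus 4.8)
The plan is to turn the summation into an integral by the Cahen-type device (the ``floor function'' form of Abel summation) already used in \cite{Pog-1}, and then to reshape the integrand by a contiguous relation for $F_{p,q}$. Write $f(x) = F_{p,q}(\lambda, b; c; -r^2/x)\,x^{-\lambda}(x+r^2)^{-\eta}$, so that $\mathfrak{F}_{\lambda,\eta}(F_{p,q};\boldsymbol{a};r) = \sum_{n\geq 1} f(a_n)$. Since $a(x)$ is increasing with $a(n)=a_n$, the counting function $x\mapsto[a^{-1}(x)]$ is a right-continuous step function that jumps by one at each $a_n$ and equals $n$ on $[a_n,a_{n+1})$. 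First I would record the equivalent-integral identity
\[
\sum_{n\geq 1} f(a_n) = \int_{a_1}^{\infty} [a^{-1}(x)]\,\bigl(-f'(x)\bigr)\,{\rm d}x ,
\]
obtained by splitting the integral over the intervals $[a_n,a_{n+1})$: this produces the telescoping partial sums $\sum_{n=1}^N f(a_n) - N f(a_{N+1})$, and the boundary term $N f(a_{N+1})$ vanishes because $\boldsymbol{a}$ increases to $\infty$ while $f$ decays like $x^{-\lambda-\eta}$ (the hypergeometric factor tending to the finite constant ${\rm B}(b,c-b;p,q)/{\rm B}(b,c-b)$).

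The crux is the explicit shape of $-f'(x)$. Differentiating $f=h\,g$ with $h(x)=F_{p,q}(\lambda,b;c;-r^2/x)$ and $g(x)=x^{-\lambda}(x+r^2)^{-\eta}$, the elementary factor contributes $-h(x)g'(x)=\lambda F_{p,q}(\lambda,b;c;-r^2/x)\,x^{-\lambda-1}(x+r^2)^{-\eta}+\eta F_{p,q}(\lambda,b;c;-r^2/x)\,x^{-\lambda}(x+r^2)^{-\eta-1}$. The delicate term is $-h'(x)g(x)$, because the textbook formula $\frac{{\rm d}}{{\rm d}z}F_{p,q}(a,b;c;z)=\frac{ab}{c}F_{p,q}(a+1,b+1;c+1;z)$ shifts $b$ and $c$ and would destroy the form of $\mathscr I_{p,q}$. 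Instead I would establish, by differentiating the defining power series of $F_{p,q}(\lambda,b;c;z)$ term by term and using $(\lambda+1)_n-(\lambda)_n=\tfrac{n}{\lambda}(\lambda)_n$, the contiguous relation
\[
z\,\frac{{\rm d}}{{\rm d}z}\,F_{p,q}(\lambda,b;c;z) = \lambda\bigl[F_{p,q}(\lambda+1,b;c;z)-F_{p,q}(\lambda,b;c;z)\bigr],
\]
which keeps $b$ and $c$ fixed. Setting $z=-r^2/x$ and applying the chain rule gives $-h'(x)g(x)=\lambda\bigl[F_{p,q}(\lambda+1,b;c;-r^2/x)-F_{p,q}(\lambda,b;c;-r^2/x)\bigr]x^{-\lambda-1}(x+r^2)^{-\eta}$; the two $F_{p,q}(\lambda,\cdot)$ contributions then cancel, leaving
\[
-f'(x) = \lambda\,\frac{F_{p,q}(\lambda+1,b;c;-r^2/x)}{x^{\lambda+1}(x+r^2)^{\eta}} + \eta\,\frac{F_{p,q}(\lambda,b;c;-r^2/x)}{x^{\lambda}(x+r^2)^{\eta+1}} .
\]

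Inserting this into the equivalent-integral identity and splitting the integral reproduces exactly $\lambda\,\mathscr I_{p,q}(\lambda+1,\eta)+\eta\,\mathscr I_{p,q}(\lambda,\eta+1)$, which is \eqref{C2}. For the alternating series the only change is the weight carried by each jump: replacing the plain count by the alternating partial sum $\sum_{k\leq n}(-1)^{k-1}$, which on $[a_n,a_{n+1})$ equals $\tfrac12\bigl(1-(-1)^n\bigr)=\sin^2\!\bigl(\tfrac{\pi}{2}n\bigr)=\sin^2\!\bigl(\tfrac{\pi}{2}[a^{-1}(x)]\bigr)$, the same Abel-summation argument yields $\mathfrak{\widetilde{F}}_{\lambda,\eta}(F_{p,q};\boldsymbol{a};r)=\int_{a_1}^{\infty}\sin^2\!\bigl(\tfrac{\pi}{2}[a^{-1}(x)]\bigr)\bigl(-f'(x)\bigr)\,{\rm d}x$, where the boundary term is even easier to control since the alternating count is bounded by $1$. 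The identical formula for $-f'(x)$ then gives $\lambda\,\mathscr{\widetilde{I}}_{p,q}(\lambda+1,\eta)+\eta\,\mathscr{\widetilde{I}}_{p,q}(\lambda,\eta+1)$, i.e. \eqref{C21}.

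The main obstacle I expect is twofold. First, pinning down the correct contiguous relation (the one fixing $b,c$) rather than the standard derivative formula, and justifying the term-by-term differentiation inside $|z|<1$ by locally uniform convergence. Second, rigorously legitimizing the equivalent-integral identity and the vanishing of the boundary term $N f(a_{N+1})$, that is, controlling $[a^{-1}(x)]\,f(x)$ as $x\to\infty$; this is precisely where the monotone divergence of $\boldsymbol{a}$ together with the decay $f(x)=O\!\bigl(x^{-\lambda-\eta}\bigr)$ is essential, and Fubini--Tonelli then authorizes interchanging the now absolutely convergent summation and integration.
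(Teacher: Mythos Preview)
Your proof is correct but follows a genuinely different route from the paper's. The paper never differentiates $f$ or invokes a contiguous relation. Instead it \emph{unfolds} both factors of the summand via integral representations: the Laplace-type identity
\[
F_{p,q}\Bigl(\lambda,b;c;-\tfrac{r^2}{a_n}\Bigr)=\frac{a_n^{\lambda}}{\Gamma(\lambda)}\int_0^\infty e^{-a_n t}\,t^{\lambda-1}\,\Phi_{p,q}(b;c;-r^2t)\,{\rm d}t
\]
for the hypergeometric factor, and the Gamma integral for $(a_n+r^2)^{-\eta}$. This leaves the inner sum as a genuine Dirichlet series $\sum_{n\ge1}e^{-a_n(t+s)}$, to which the Cahen formula $\mathcal D_a(u)=u\int_{a_1}^\infty e^{-ux}[a^{-1}(x)]\,{\rm d}x$ is applied. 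The extra factor $u=t+s$ coming from Cahen is what produces the splitting into two pieces; re-integrating in $t$ and $s$ then rebuilds $F_{p,q}(\lambda+1,\cdot)$ and $(x+r^2)^{-\eta-1}$ respectively, giving $\lambda\,\mathscr I_{p,q}(\lambda+1,\eta)+\eta\,\mathscr I_{p,q}(\lambda,\eta+1)$.

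Your argument is more elementary and self-contained: Abel summation plus the single contiguous identity $z\,\partial_zF_{p,q}(\lambda,b;c;z)=\lambda\bigl[F_{p,q}(\lambda+1,b;c;z)-F_{p,q}(\lambda,b;c;z)\bigr]$ (correctly derived from $(\lambda+1)_n-(\lambda)_n=\tfrac{n}{\lambda}(\lambda)_n$) does all the work, and the cancellation you observe explains structurally why only the first parameter shifts. The paper's approach, by contrast, is a piece of general machinery that transfers unchanged to any kernel admitting a Laplace-type representation, and it makes the appearance of the two shifted integrals look inevitable (from the $t+s$ factor) rather than the result of a fortuitous cancellation. Both proofs rely on the same implicit convergence hypothesis, and your discussion of the boundary term $Nf(a_{N+1})\to0$ (via eventual monotonicity of $f$) is, if anything, more explicit than what the paper offers.
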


\begin{proof}
Consider the Laplace transform formula of the extended Kummer's function $ t^{\lambda-1}\;\Phi_{p,q}(b;c; z)$.
By using the definition \eqref{A6}, for real $\omega$,  it follows easily
   \begin{equation}\label{Laplace}
      F_{p,q}\left(\lambda, b;c; \frac{\omega}{z}\right) = \frac{z^{\lambda}}{\Gamma(\lambda)}\int_{0}^{\infty}
			            {\rm e}^{-zt} t^{\lambda-1}\;\Phi_{p,q}(b;c;\omega t )\, {\rm d}t.
   \end{equation}
Taking $\xi = a_{n} + r^{2}$ in the familiar Gamma formula:
   \[ \Gamma(\eta)\xi^{-\eta}=\int_{0}^{\infty} e^{-\xi t}  t^{\eta-1}\, {\rm d}t \qquad
      (\min\{\Re(\xi),\,\Re(\eta)\}>0) \]
and after rearrangement by specifying $\omega=-r^{2}$, $z=a_{n}$, in \eqref{Laplace}, the function
$\mathfrak{F}_{\lambda,\eta}(F_{p,q};\boldsymbol{a};r)$ becomes
   \[ \mathscr{I}_{p,q}(\lambda,\eta) = \int_0^{\infty}\int_0^{\infty} \frac{{\rm e}^{-r^2 s} \,t^{\lambda-1}s^{\eta-1}}
                                  {\Gamma(\lambda)\Gamma(\eta)} \left(\sum_{n \geq 1} {\rm e}^{-a_n(t+s)}\right)
																	\Phi_{p,q}(b;c;-r^{2}t)\;{\rm d}t\;{\rm d}s\,. \]
Using the Cahen formula \cite{Cah} for summing up the Dirichlet series in the technique developed in \cite{Pog-5}, we conclude
   \[ \mathcal{D}_{a}(t+s) = \sum_{n \geq 1} {\rm e}^{-a_n(s+t)}
	                         = (s+t)\int_{a_{1}}^{\infty} e^{-(t+s)}[a^{-1}(x)]\;{\rm d} x\,.\]
This gives
   \begin{align}\label{C5}
      \mathscr{I}_{p,q}(\lambda,\eta) &= \frac1{\Gamma(\lambda)\Gamma(\eta)} \int_{0}^{\infty} \int_{0}^{\infty}\int_{a_{1}}^{\infty}
			                             {\rm e}^{-(r^{2}+x) s-tx}(t+s)t^{\lambda-1}s^{\eta-1}[a^{-1}(x)]\notag \\
                                &\qquad \times\; \Phi_{p,q}(b;c;-r^{2}t)\;{\rm d}t\;{\rm d}s\;{\rm d}x
																 =: \mathcal{I}_{t}+\mathcal{I}_{s}\,,
   \end{align}
where
   \begin{align} \label{C6}
      \mathcal{I}_{t} &= \frac1{\Gamma(\eta)}\int_{0}^{\infty}\left(\int_{a_1}^{\infty}\left(\int_0^{\infty}
			                   \frac{{\rm e}^{-xt}t^{\lambda}}{\Gamma(\lambda)}\Phi_{p,q}(b;c;-r^{2}t)\;{\rm d}t\right)
												 {\rm e}^{-xs}[a^{-1}(x)]\;{\rm d}x\right)\notag \\
											&\qquad	 \times {\rm e}^{-r^{2}s}\, s^{\eta-1}\, {\rm d}s \notag \\
                      &= \lambda \int_{a_{1}}^{\infty}\left(\int_{0}^{\infty}
											   \frac{s^{\eta-1}}{\Gamma(\eta)}{\rm e}^{-(x+r^{2})s}\;{\rm d}s\right) \frac{[a^{-1}(x)]}{x^{\lambda+1}}\;
												 F_{p,q}\,\left(\lambda+1,\, b;\,c;\, -\frac{r^{2}}{x}\right)\;{\rm d}x\notag \\
                      &= \lambda\;\int_{a_{1}}^{\infty}\frac{[a^{-1}(x)]}{x^{\lambda+1}(x+r^{2})^{\eta}}\;
											   F_{p,q}\,\left(\lambda+1,\, b;\,c;\, -\frac{r^{2}}{x}\right)\;{\rm d}x
											 = \lambda \;\mathscr{I}(\lambda+1,\eta)\,.
   \end{align}
Similarly we get
   \begin{align} \label{C7}
      \mathcal{I}_{s} &= \eta\;\int_{a_{1}}^{\infty}\frac{[a^{-1}(x)]}{(x+r^{2})^{\eta+1}}\left(\int_{0}^{\infty}
                         \frac{{\rm e}^{-xt}t^{\lambda-1}}{\Gamma(\lambda)}\Phi_{p,q}(b;c;-r^{2}t)\;{\rm d}t\right)\;{\rm d}x\notag \\
                      &= \eta\;\int_{a_{1}}^{\infty}\frac{[a^{-1}(x)]}{x^{\lambda}(x+r^{2})^{\eta+1}}\;
											   F_{p,q}\,\left(\lambda,\, b;\,c;\, -\frac{r^{2}}{x}\right)\;{\rm d}x = \eta \;\mathscr{I}(\lambda,\eta+1).
   \end{align}
Now, applying \eqref{C6} and \eqref{C7} to \eqref{C5} we deduce the expression \eqref{C2}.

The derivation of \eqref{C21} is done with a similar procedure as in getting \eqref{C2}.
As to the alternating Dirichlet series $\mathcal{D}_{a}(x)$
integral form, having in mind again the Cahen formula, we have \cite{Pog-5}
   \[ \mathcal{\widetilde{D}}_{a}(x) = \sum_{n \geq 1}(-1)^{n-1}{\rm e}^{-a_{n}(x)}
	                                   = x\;\int_{a_{1}}^{\infty} {\rm e}^{-xt}\widetilde{A}(t)\;{\rm d}t,\]
and therefore
   \[ \mathcal{\widetilde{D}}_{a}(x) = x\;\int_{a_{1}}^{\infty} {\rm e}^{-xt}\sin^{2}\left(\frac{\pi}{2}[a^{-1}(x)]\right)\;{\rm d}t,\]
since the counting function turns out to be
   \[ \widetilde{A}(t) = \sum_{n\colon a_{n}\leq t}(-1)^{n-1} = \frac{1-(-1)^{[a^{-1}(t)]}}{2}
	                            =\sin^{2}\left(\frac{\pi}{2}[a^{-1}(t)]\right)\, .\]
Hence, because
   \[ \mathcal{\widetilde{D}}_{a}(t+s) = (t+s)\;\int_{a_{1}}^{\infty} {\rm e}^{-(t+s)x}
	                                       \sin^{2}\left(\frac{\pi}{2}[a^{-1}(t)]\right)\;{\rm d}x,\]
we conclude \eqref{C21} by carrying out the obvious remaining steps.
\end{proof}

Now, in the case $p=q$, Theorem \ref{thm-1} reduces to the following corollary.

\begin{corollary}\label{cor-1}
Let $\lambda$,\;$\eta$,\;$r \in \mathbb{R}^+$ and let $\boldsymbol{a} = (a_n)_{n \geq 1}$ be  a real sequence
 which increases monotonically and tends to $\infty$.
  Then for $\Re(p)\geq 0$ we have
  \[ \mathfrak{F}_{\lambda,\eta}(F_{p};\boldsymbol{a};r)        = \lambda \;\mathscr{J}_p(\lambda+1,\eta)
	                         +\eta\; \mathscr{J}_p(\lambda,\eta+1),\]
  and
  \[ \mathfrak{\widetilde{F}}_{\lambda,\eta}(F_{p};\boldsymbol{a};r) = \lambda \;\mathscr{\widetilde{J}}_p(\lambda+1,\eta)
	      +\eta\; \mathscr{\widetilde{J}}_p(\lambda,\eta+1), \]
  where
  \[ \mathscr{J}_p(\lambda,\eta)         = \int_{a_1}^{\infty} \frac{F_{p}\,(\lambda,\, b;\,c;\,
	                 - \frac{r^{2}}{x})[a^{-1}(x)]}{x^{\lambda}(x+r^{2})^{\eta}}\;dt,\]
  and
   \[ \mathscr{\widetilde{J}}_p(\lambda,\eta) = \int_{a_1}^{\infty} \frac{F_{p}\,(\lambda,\, b;\,c;\,
			                                   - \frac{r^{2}}{x})\;\sin^{2}\left(\frac{\pi}{2}[a^{-1}(x)]\right)}
		   {x^{\lambda}(x+r^{2})^{\eta}}\;{\rm d}t. \]
  \end{corollary}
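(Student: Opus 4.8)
The plan is to recognize that the entire content of this corollary rests on a single elementary identity relating the two extended Beta functions when $p=q$, after which everything follows by direct specialization of Theorem~\ref{thm-1}. First I would examine the exponential kernel in the defining integral of the $(p,q)$--extended Beta function
\[ {\rm B}(x,y;p,q) = \int_0^1 t^{x-1}(1-t)^{y-1}\, {\rm e}^{-\frac{p}{t}-\frac{q}{1-t}}\,{\rm d}t \]
and set $q=p$. The exponent then collapses via
\[ \frac{p}{t}+\frac{p}{1-t} = p\,\frac{(1-t)+t}{t(1-t)} = \frac{p}{t(1-t)}, \]
so that ${\rm B}(x,y;p,p)$ coincides \emph{exactly} with the $p$--extended Beta function ${\rm B}(x,y;p)$ of Chaudhry {\it et al.}

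Given this identity, the next step is to propagate it to the hypergeometric level. Since $F_{p,q}(a,b;c;z)$ is defined as a series built from the ratio ${\rm B}(b+n,c-b;p,q)/{\rm B}(b,c-b)$, while $F_p(a,b;c;z)$ is defined in \eqref{A2} with the $p$--variant ${\rm B}(b+n,c-b;p)$ in its place, the termwise equality of the Beta factors immediately yields $F_{p,p}(a,b;c;z)=F_p(a,b;c;z)$ for every admissible argument. In particular the summands of $\mathfrak{F}_{\lambda,\eta}(F_{p,p};\boldsymbol a;r)$ agree term by term with those of $\mathfrak{F}_{\lambda,\eta}(F_p;\boldsymbol a;r)$, and the analogous statement holds for the alternating series.

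Finally I would substitute $q=p$ throughout the conclusion of Theorem~\ref{thm-1}. The integrals $\mathscr I_{p,p}(\lambda,\eta)$ and $\mathscr{\widetilde I}_{p,p}(\lambda,\eta)$ defined in \eqref{C3}--\eqref{C31} reduce, by the identity just established, to $\mathscr J_p(\lambda,\eta)$ and $\mathscr{\widetilde J}_p(\lambda,\eta)$ respectively, whence the two master formulas \eqref{C2} and \eqref{C21} become precisely the two asserted identities. There is essentially no analytic obstacle here: the result is a genuine specialization rather than a fresh theorem, and the only point requiring any care is the elementary verification that the two differently presented exponential kernels indeed coincide when $p=q$.
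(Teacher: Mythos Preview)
Your proposal is correct and follows exactly the paper's approach: the paper simply states that Corollary~\ref{cor-1} is the case $p=q$ of Theorem~\ref{thm-1}, and you have supplied the (elementary) verification that ${\rm B}(x,y;p,p)={\rm B}(x,y;p)$ and hence $F_{p,p}=F_p$, which is the only content needed.
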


\vskip 3mm

\begin{remark}\label{rmk-1}
The special case of {\rm Theorem \ref{thm-1}} when $p=q=0$ is seen to immediately reduce
to the Gauss hypergeometric function ${}_2F_1$ result  in \cite{Pog-1}.\end{remark}

\section{Bounding inequalities for the $(p,q)$--extended Mathieu--type series} 

Very recently Parmar and Pog\'any \cite{Parmar-Pogany}  have established an upper bound for the $(p, q)$--extended Beta function 
${\rm B}(x, y; p,q)$ (see \cite[Lemma 2]{Parmar-Pogany}). Namely, we have	
	 \[ {\rm B}(x,y;p,q) \leq {\rm e}^{ -(\sqrt{p}+\sqrt{q})^2}\,{\rm B}(x,y) \qquad (\min\{x, y, p, q\}\geq 0), \]
by observing 
   \[ \sup_{0 < t< 1} {\rm e}^{- \frac{p}{t} - \frac{q}{1-t}} = {\rm e}^{ -(\sqrt{p}+\sqrt{q})^2} =: \mathfrak E_{p, q}
			                \qquad (\min\{p,\, q\} \geq 0). \]
Here we recall  the following results in \cite[Theorem 8, Eqs. (3.2) and (3.3)]{Parmar-Pogany}:
  \begin{equation}\label{G3}
    \left| F_{p,q}(a, b;c; z)\right|  \leq {\rm e}^{ -(\sqrt{p}+\sqrt{q})^2}\, {}_2F_1(a, b; c; |z|)
  \end{equation}
 and 
    \[ \left|\Phi_{p,q}(b;c; z)\right|\leq {\rm e}^{ -(\sqrt{p}+\sqrt{q})^2}\, \Phi(b; c; |z|),\]
where $\min\{p,\,q\}\geq 0$,  $c>b>0$ and $|z|<1$. Also we need to recall  a certain Luke's upper bound for the Gaussian
hypergeometric function (see \cite[p. 52, Eq. (4.7)]{Luke}): For all $b \in(0, 1],\, c \geq a>0$ and $z>0$, 
the following inequality holds true:
   \begin{align} \label{G5}
	    {}_2F_1(a, b; c; -z) < 1 - \frac{2ab(c+1)}{c(a+1)(b+1)} \Big[1- \frac{2(c+1)}{2(c+1)+(a+1)(b+1)\,z} \Big] \, .
	 \end{align}
For simplicity, the following notation is introduced:
  \begin{equation}\label{u-notation}
   \mathscr U_a(\lambda, \eta) := \int_{a_1}^\infty \frac{[a^{-1}(x)]}{x^{\lambda}(x+r^{2})^{\eta}}\;{\rm d}x.
  \end{equation}
In the sequel we consider  Mathieu--type series \eqref{A7} and \eqref{A8} in which the defining functions
$a \colon \mathbb R_+ \mapsto \mathbb R_+$ behave so that $\mathscr U_a(\lambda, \eta)$ converges.

\vskip 3mm

\begin{theorem}\label{thm-2}
 Let $\lambda \in (0, 1]$ and $ \eta \in \mathbb{R}^+$ and let $\boldsymbol{a} = (a_n)_{n \geq 1}$ be  a real sequence
 which increases monotonically and tends to $\infty$. Then, for all $r \in (0, \sqrt{a_1}\;)$,   $\min\{p,\,q\} \geq 0$ and $c>b>0$, we have
   \begin{align} \label{G6}
	    \mathfrak F_{\lambda,\eta}(F_{p,q};\boldsymbol{a};r) &\leq \lambda \;\mathfrak E_{p, q}
			    \Bigg\{ \left(1-\frac{2(\lambda+1) b(c+1)}{c(\lambda+2)(b+1)}\right) \mathscr U_a(\lambda+1, \eta)  \nonumber\\
			 &\qquad + \frac{4(\lambda+1) b(c+1)^2\,\mathscr U_a(\lambda, \eta)}
											{c(\lambda+2)(b+1)\,[(\lambda+2)(b+1)r^2+2(c+1)a_1]} \Bigg\} \nonumber \\
			 &\qquad + \eta\;\mathfrak E_{p, q} \Bigg\{ \left(1-\frac{2\lambda b(c+1)}{c(\lambda+1)(b+1)}\right)
								      \mathscr U_a(\lambda, \eta+1)  \nonumber \\
			 &\qquad + \frac{4\lambda b(c+1)^2\,\mathscr U_a(\lambda-1, \eta+1)}
											{c(\lambda+1)(b+1)\,[(\lambda+1)(b+1)r^2+2(c+1)a_1]} \Bigg\}\,.
	 \end{align}
Moreover, for all $\lambda+\eta>1$, $r \in (0, \sqrt{a_1}\;)$,   $\min\{p,\, q\} \geq 0$ and $c>b>0$, we have
   \begin{align} \label{G7}
	    \mathfrak{\widetilde{F}}_{\lambda,\eta}(F_{p,q};\boldsymbol{a}&;r) \leq
			          \lambda \,\mathfrak E_{p, q} \Bigg\{ \left(1 - \frac{2(\lambda+1)b(c+1)}{c(\lambda+2)(b+1)}\right)
								      \frac{{}_2F_1\left(\eta, \lambda+\eta; \eta+1; -\frac{r^2}{a_1}\right)}{(\lambda+\eta)\,a_1^{\lambda+\eta}}\, 
											\nonumber \\
						&\quad+ \frac{4(\lambda+1) b(c+1)^2}{c(\lambda+2)(b+1)} \frac{a_1^{1-\lambda-\eta}\,
						          {}_2F_1\Big(\eta, \lambda+\eta-1; \eta+1; -\frac{r^2}{a_1}\Big)}
									    {(\lambda+\eta-1)[(\lambda+2)(b+1)r^2+2(c+1)a_1]}\Bigg\} \nonumber \\
						&\quad+ \eta\, \mathfrak E_{p, q} \Bigg\{ \left(1 - \frac{2\lambda b(c+1)}{c(\lambda+1)(b+1)}\right)
								      \frac{{}_2F_1\left(\eta+1, \lambda+\eta; \eta+2; -\frac{r^2}{a_1}\right)}{(\lambda+\eta)\,a_1^{\lambda+\eta}}\, 
											\nonumber \\
						&\quad+ \frac{4\lambda b(c+1)^2}{c(\lambda+1)(b+1)} \frac{a_1^{1-\lambda-\eta}\,
						          {}_2F_1\Big(\eta+1, \lambda+\eta-1; \eta+2; -\frac{r^2}{a_1}\Big)}
									    {(\lambda+\eta-1)[(\lambda+1)(b+1)r^2+2(c+1)a_1]}\Bigg\}\,.
		\end{align}
\end{theorem}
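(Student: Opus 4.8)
The plan is to start from the closed integral representations supplied by Theorem~\ref{thm-1}, namely $\mathfrak F_{\lambda,\eta}(F_{p,q};\boldsymbol a;r)=\lambda\,\mathscr I_{p,q}(\lambda+1,\eta)+\eta\,\mathscr I_{p,q}(\lambda,\eta+1)$ together with its alternating counterpart \eqref{C21}, and to estimate each integrand in three successive steps: (i) replace the $(p,q)$--extended factor $F_{p,q}$ by $\mathfrak E_{p,q}$ times the classical ${}_2F_1$ at the \emph{same} argument; (ii) bound this ${}_2F_1$ by Luke's inequality \eqref{G5}; and (iii) integrate the resulting elementary weight against the monotone counting data $[a^{-1}(x)]$ in \eqref{C3} or $\sin^2(\tfrac\pi2[a^{-1}(x)])$ in \eqref{C31}.

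For step (i) I would invoke \eqref{G3}. The delicate point is the sign of the argument: the summands carry $F_{p,q}(\lambda,b;c;-r^2/x)$ at a \emph{negative} argument, and because $0<r^2/x\le r^2/a_1<1$ for $r\in(0,\sqrt{a_1})$ and $x\ge a_1$, the Eulerian integrand defining $F_{p,q}$ stays positive, so the estimate may be applied with the negative argument retained, giving $F_{p,q}(\lambda,b;c;-r^2/x)\le \mathfrak E_{p,q}\,{}_2F_1(\lambda,b;c;-r^2/x)$. This is exactly what legitimises step (ii), since Luke's inequality \eqref{G5} estimates ${}_2F_1(a,b;c;-z)$ only at negative argument $z>0$; applying it with $z=r^2/x$ and $a\in\{\lambda,\lambda+1\}$ (the requirements $b\in(0,1]$ and $c\ge a$ of \eqref{G5} being in force, with $a=\lambda+1\le 2$ from $\lambda\in(0,1]$) turns the bracket of \eqref{G5} into $\dfrac{(\lambda+2)(b+1)r^2}{2(c+1)x+(\lambda+2)(b+1)r^2}$ and its $\eta$--shifted analogue $\dfrac{(\lambda+1)(b+1)r^2}{2(c+1)x+(\lambda+1)(b+1)r^2}$.

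For the non--alternating series I then feed these estimates into $\mathscr I_{p,q}(\lambda+1,\eta)$ and $\mathscr I_{p,q}(\lambda,\eta+1)$. The constant part of Luke's bound reproduces $\mathscr U_a(\lambda+1,\eta)$ and $\mathscr U_a(\lambda,\eta+1)$ of \eqref{u-notation} verbatim; in the rational part I would bound the $x$--dependent denominator below by its value at $x=a_1$, i.e.\ $2(c+1)x+(\lambda+2)(b+1)r^2\ge 2(c+1)a_1+(\lambda+2)(b+1)r^2$, which extracts the constant appearing in \eqref{G6} and leaves a factor $x/x^{\lambda+1}=x^{-\lambda}$, converting the remaining integral into $\mathscr U_a(\lambda,\eta)$ (and, for the $\eta$--group with $a=\lambda$, into $\mathscr U_a(\lambda-1,\eta+1)$). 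Collecting the four pieces with the common factor $\lambda\mathfrak E_{p,q}$ and $\eta\mathfrak E_{p,q}$ yields \eqref{G6}.

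For the alternating series the single change is that the weight $\sin^2(\tfrac\pi2[a^{-1}(x)])$ is bounded trivially by $1$, so after the same two pointwise estimates each surviving integral is of the elementary type $\int_{a_1}^\infty x^{-\mu}(x+r^2)^{-\eta}\,{\rm d}x$. I would evaluate these in closed form through the Eulerian integral representation of the Gauss function: the substitution $x=a_1/u$ sends such an integral to $a_1^{1-\mu-\eta}(\mu+\eta-1)^{-1}$ times a ${}_2F_1$ with upper parameters $\eta,\,\mu+\eta-1$ and argument $-r^2/a_1$, whereupon specialising $\mu=\lambda+1,\lambda$ (with $\eta\mapsto\eta+1$ in the second group) produces the four hypergeometric terms of \eqref{G7}; the hypothesis $\lambda+\eta>1$ is precisely the convergence condition for the correction integrals carrying $\mu=\lambda$, as reflected by the factor $(\lambda+\eta-1)^{-1}$ there. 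The main obstacle I anticipate is bookkeeping rather than depth — one must track the two parameter triples cleanly through steps (i)--(iii) and, above all, keep the argument negative so that \eqref{G5} is used inside its valid range — while the closed--form evaluation of the elementary integrals is then routine.
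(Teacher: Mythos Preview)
Your proposal is correct and follows essentially the same route as the paper: start from the integral representations of Theorem~\ref{thm-1}, establish positivity of $F_{p,q}$ at the negative argument via its Euler integral (so that the bound $F_{p,q}\le\mathfrak E_{p,q}\,{}_2F_1$ holds at the \emph{same} negative argument), apply Luke's inequality \eqref{G5}, bound the $x$--dependent denominator below by its value at $x=a_1$, and for the alternating case replace $\sin^2(\cdot)\le1$ and evaluate the resulting elementary integrals in closed hypergeometric form. One small remark: your parenthetical ``$a=\lambda+1\le2$ from $\lambda\in(0,1]$'' is not the operative Luke hypothesis; what \eqref{G5} actually needs there is $c\ge\lambda+1$ (and $b\in(0,1]$), conditions which the paper, like you, tacitly assumes.
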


\begin{proof} Firstly consider relation \eqref{C2}:
   \[ \mathfrak{F}_{\lambda,\eta}(F_{p,q};\boldsymbol{a};r) = \lambda \;\mathscr I_{p, q}(\lambda+1,\eta)
                                                          + \eta\; \mathscr I_{p, q}(\lambda,\eta+1)\,,\]
which is found to be bounded above by the auxiliary integral $\mathscr I_{p,q}$  in \eqref{C3}. To do this, we observe that
    \begin{equation}\label{positive}
      F_{p,q}(a, b;c; z) >0 \qquad \left(a \in \mathbb{R}^+,\, c>b>0,\, 0<z<1 \right).
    \end{equation}
 Indeed, it is enough to consider the following known integral
expression \cite[p. 373, Eq. (8.2)]{Ch-Ra-Pa}:
   \[ F_{p,q}(a, b;c; z) = \frac1{{\rm B}(b, c-b)} \int_0^1 \frac{t^{b-1}(1-t)^{c-b-1}}{(1-zt)^a}\,
	                         \exp \left(-\frac pt - \frac q{1-t}\right)\, {\rm d}t\,>0, \]
under the given conditions in \eqref{positive}. Therefore, by virtue of \eqref{G3} and \eqref{G5}, it follows
   \begin{align*}
	    \mathscr I_{p, q}(\lambda, \eta) &= \int_{a_1}^{\infty} \frac{F_{p,q}\,(\lambda,\, b;\,c;\,
			              - \frac{r^{2}}{x})[a^{-1}(x)]}{x^{\lambda}(x+r^{2})^{\eta}}\;{\rm d}x \\
								&\leq \mathfrak E_{p, q} \int_{a_1}^{\infty} \frac{{}_2F_1\,(\lambda,\, b;\,c;\,
			              - \frac{r^{2}}{x})[a^{-1}(x)]}{x^{\lambda}(x+r^{2})^{\eta}}\;{\rm d}x\\
							  &\leq \mathfrak E_{p, q} \Bigg\{ \left(1 - \frac{2\lambda b(c+1)}{c(\lambda+1)(b+1)}\right)
								      \int_{a_1}^{\infty} \frac{[a^{-1}(x)]}{x^{\lambda}(x+r^{2})^{\eta}}\;{\rm d}x \\
								&\qquad + \frac{4\lambda b(c+1)^2}{c(\lambda+1)^2(b+1)^2} \int_{a_1}^{\infty} \frac{[a^{-1}(x)]\;{\rm d}x}
											{x^{\lambda-1}(x+r^{2})^\eta\,\left[r^2+2\frac{(c+1)x}{(\lambda+1)(b+1)}\right]}\Bigg\} \\
								&\leq \mathfrak E_{p, q} \Bigg\{ \left(1-\frac{2\lambda b(c+1)}{c(\lambda+1)(b+1)}\right)
								      \mathscr U_a(\lambda, \eta)  \\
								&\qquad + \frac{4\lambda b(c+1)^2\,\mathscr U_a(\lambda-1, \eta)}
											{c(\lambda+1)(b+1)\,[(\lambda+1)(b+1)r^2+2(c+1)a_1]} \Bigg\}\,.
	 \end{align*}
The rest in deriving \eqref{G6} is obvious.

Secondly, here we recall \eqref{C21} as follows:
  \[ \mathfrak{\widetilde{F}}_{\lambda,\eta}(F_{p,q};\boldsymbol{a};r) = \lambda \;\mathscr{\widetilde{I}}_{p, q}(\lambda+1,\eta)
			                                                           + \eta\; \mathscr{\widetilde{I}}_{p, q}(\lambda,\eta+1)\, ,\]
by  positivity of the integrand of \eqref{C31}, we have
   \[ \mathscr{\widetilde{I}}_{p, q}(\lambda,\eta) \leq \int_{a_1}^{\infty} \frac{F_{p,q}\,(\lambda,\, b;\,c;\,
			           - \frac{r^{2}}{x})}{x^{\lambda}(x+r^{2})^{\eta}}\;{\rm d}x
						\leq \mathfrak E_{p, q} \int_{a_1}^{\infty} \frac{{}_2F_1\,(\lambda,\, b;\,c;\,
			           - \frac{r^{2}}{x})}{x^{\lambda}(x+r^{2})^{\eta}}\;{\rm d}x \, .\]
With the aid of \eqref{G5}, we conclude
   \begin{align*}
	    \mathscr{\widetilde{I}}_{p, q}(\lambda,\eta) &\leq \mathfrak E_{p, q} \Bigg\{ \left(1 -
			                \frac{2\lambda b(c+1)}{c(\lambda+1)(b+1)}\right)
								      \int_{a_1}^{\infty} \frac{{\rm d}x}{x^{\lambda}(x+r^{2})^{\eta}} \\
						&\qquad + \frac{4\lambda b(c+1)^2}{c(\lambda+1)^2(b+1)^2} \int_{a_1}^{\infty} \frac{[a^{-1}(x)]\;{\rm d}x}
											{x^{\lambda-1}(x+r^{2})^\eta\,\left[r^2+2\frac{(c+1)x}{(\lambda+1)(b+1)}\right]}\Bigg\}.
	 \end{align*}
Using \cite[p. 313, Eq. {\bf 3.194} 1.]{GR} for $\lambda+\eta>1$ we have
   \[ \int_{a_1}^{\infty} \frac{{\rm d}x}{x^{\lambda}(x+r^{2})^{\eta}}
	              = \int_0^{\frac1{a_1}} \frac{t^{\lambda+\eta-2}}{(1+r^2t)^{\eta}}\;{\rm d}t
								= \frac{{}_2F_1\left(\eta, \lambda+\eta-1; \eta+1; -\frac{r^2}{a_1}\right)}{(\lambda+\eta-1)\,a_1^{\lambda+\eta-1}}\, , \]
which for $\lambda+\eta>2$  implies
   \begin{align*} 
      \int_{a_1}^{\infty} &\frac{{\rm d}x}{x^{\lambda-1}(x+r^{2})^{\eta}\,[(\lambda+1)(b+1)r^2+2(c+1)x]} \\
	                &\qquad \qquad \qquad \leq \frac{a_1^{2-\lambda-\eta}\, {}_2F_1\Big(\eta, \lambda+\eta-2; \eta+1; -\frac{r^2}{a_1}\Big)}
									     {(\lambda+\eta-2)[(\lambda+1)(b+1)r^2+2(c+1)a_1]} \,.
	 \end{align*}								     
Collecting these formulae we get the upper bound
   \begin{align*}
	    \mathscr{\widetilde{I}}_{p, q}(\lambda,\eta) &\leq \mathfrak E_{p, q} \Bigg\{ \left(1 -
			                \frac{2\lambda b(c+1)}{c(\lambda+1)(b+1)}\right)
								      \frac{{}_2F_1\left(\eta, \lambda+\eta-1; \eta+1; -\frac{r^2}{a_1}\right)}{(\lambda+\eta-1)\,a_1^{\lambda+\eta-1}} \\
						&\qquad + \frac{4\lambda b(c+1)^2}{c(\lambda+1)(b+1)} \frac{a_1^{2-\lambda-\eta}\,
						          {}_2F_1\Big(\eta, \lambda+\eta-2; \eta+1; -\frac{r^2}{a_1}\Big)}
									    {(\lambda+\eta-2)[(\lambda+1)(b+1)r^2+2(c+1)a_1]}\Bigg\} \,.
	 \end{align*}
Now, obvious steps lead to the asserted upper bound \eqref{G7}.
\end{proof}

\vskip 3mm

\begin{remark} \label{rmk-2}
 First observe $\mathfrak E_{p, p} = {\rm e}^{-4p}$. Then the special case of the results in {\rm Theorem \ref{thm-2}}
 can be reduced to yield the simpler upper bound expressions for the respective related Mathieu--type series and its alternating variant
$\mathfrak F_{\lambda,\eta}(F_p;\boldsymbol{a};r)$ and  $\mathfrak{\widetilde{F}}_{\lambda,\eta}(F_p;\boldsymbol{a};r)$.
 Yet their detailed descriptions are left to the interested reader.
\end{remark}

\end{document}